\let\bbordermatrix\bordermatrix
\patchcmd{\bbordermatrix}{8.75}{4.75}{}{}
\patchcmd{\bbordermatrix}{\left(}{\left[}{}{}
\patchcmd{\bbordermatrix}{\right)}{\right]}{}{}
\DeclareMathOperator{\Hess}{\mathit{Hess}}
\DeclareMathOperator{\cdelta}{\mathbb{C}[\alpha_i: \alpha_i\in \Delta ]}
    \title{The Containment Poset of Type $A$ Hessenberg Varieties}
    \author{Elizabeth Drellich}
\begin{document}
    \maketitle 
\begin{abstract}
Flag varieties are well-known algebraic varieties with many important geometric, combinatorial, and representation theoretic properties. A Hessenberg variety is a subvariety of a flag variety identified by two parameters: an element $X$ of the Lie algebra $\mathfrak{g}$ and a Hessenberg subspace $H\subseteq \mathfrak{g}$.  This paper considers when two Hessenberg spaces define the same Hessenberg variety when paired with $X$.  To answer this question we present the containment poset $\mathcal{P}_X$ of type $A$ Hessenberg varieties with a fixed first parameter $X$ and prove directly that if $X$ is not a multiple of the element $\bf 1$ then the Hessenberg spaces containing the Borel subalgebra determine distinct Hessenberg varieties. Lastly we give a natural involution on $\mathcal{P}_X$  that induces a homeomorphism of varieties and prove additional properties of $\mathcal{P}_X$ when $X$ is a regular nilpotent element.
\end{abstract}

\newtheorem{theorem}{Theorem}
\newtheorem{assertion}[theorem]{Assertion}
\newtheorem{Theorem}[theorem]{Theorem}
\newtheorem{proposition}[theorem]{Proposition}
\newtheorem{Proposition}[theorem]{Proposition}
\newtheorem{Lemma}[theorem]{Lemma}
\newtheorem{lemma}[theorem]{Lemma}
\newtheorem{Definition}[theorem]{Definition}
\newtheorem*{Definition*}{Definition}
\newtheorem{claim}[theorem]{Claim}
\newtheorem{Corollary}[theorem]{Corollary}
\newtheorem{condition}[theorem]{Condition}
\newtheorem{Conjecture}[theorem]{Conjecture}
\newtheorem{Example}[theorem]{Example}
\newtheorem{example}[theorem]{Example}
\newtheorem{Remark}[theorem]{Remark}
\newtheorem{remark}[theorem]{Remark}

\section{Introduction}

A Hessenberg variety is a subvariety of a flag variety $G/B$ determined by two parameters: $X$, an element of the underlying Lie algebra $\mathfrak{g}$, and a Hessenberg subspace $H \subseteq \mathfrak{g}$ (see Definition \ref{def: general hess space}
).  Any such pair of parameters defines a Hessenberg variety
$$\Hess(X,H) = \{ gB \in G/B : Ad(g^{-1})X \in H\}.$$

Hessenberg varieties appear in a number of mathematical fields under a number of names. For example Springer fibers, originally defined by Springer to construct representations of Weyl groups, can be expressed as $Spr(X) = \Hess(X,\mathfrak{b})$ \cite{Springer1978}. Peterson varieties first appeared in the construction of the quantum cohomology of the Grassmannian \cite{Peterson-notes}. Wachs and Shareshian's work on quasi-symmetric functions uses Hessenberg varieties to geometrically encode combinatorial data \cite{ShareshianWachs}.

Of course Hessenberg varieties are also interesting objects in their own right. People have studied the structure of certain subfamilies, like the Peterson varieties \cite{InskoYong}, as well as how Hessenberg varieties intersect with certain Schubert varieties \cite{InskoTymoczko}.  More generally the structure of Hessenberg varieties can be studied by paving them with affines \cite{ PrecupAffinePavings, TymozckoPureDimensional, TymoczkoPavingAffines}. Some Hessenberg varieties also carry non-trivial torus actions \cite{DeMariProcesiShayman92}, leading to the study of the torus equivariant cohomology of some Hessenberg varieties, such as those where $X$ is either a regular nilpotent element\cite{AbeHaradaHoriguchiMasuda, MyPeterson, HaradaTymoczkoPosetPinball} or a regular semisimple element of $\mathfrak{g} $ \cite{AbeHoriguchiMasuda}.

Despite their usefulness, it can be quite difficult to describe the structure of Hessenberg varieties directly. Even something as basic as determining when two Hessenberg varieties are equal can be non-trivial. 

\begin{Definition*}\cite[Def 1.6]{TymozckoPureDimensional}
\label{def: x-equiv}
Two Hessenberg spaces $H_1$ and $H_2$ are $X$-equivalent if $\Hess(X,H_1) = \Hess(X,H_2)$. 
\end{Definition*}
While a few $X$-equivalent  spaces are immediate, for example all Hessenberg spaces are $\bf 0$- and $\bf 1$-equivalent, non-trivial examples can be harder to identify. To determine where such examples exist, we define 
 the containment poset $\mathcal{P}_X$ of Hessenberg varieties with a fixed $X$.
 The containment poset on Schubert varieties is well known to be the {Bruhat} order on the corresponding Weyl group. The posets $\mathcal{P}_X$ vary widely depending on the choice of $X$ (see Examples \ref{ex: big poset proj},  \ref{ex: big poset nil}, and Figure \ref{fig: example px}). Nevertheless an interval at the top of the poset is the same for most choices of $X$.
Our main theorem (Theorem \ref{thm: new main thm}) shows that if $X\in \mathfrak{g}$ is not a multiple of $\bf 1$ then no two Hessenberg spaces containing $\mathfrak{b}$ are $X$-equivalent.  

 In Section \ref{sec: def} we present the necessary definitions and background on Hessenberg varieties. Section \ref{sec: poset} presents our motivating question on the existence of $X$-equivalent Hessenberg spaces and defines the poset $\mathcal{P}_X$ of Hessenberg varieties. We also present some of the diversity of structures $\mathcal{P}_X$ can have. The main theorem, that if $\mathfrak{b}\subseteq H$ and $X \neq a \bf 1$ then $\Hess(X,H)$ is uniquely determined by $H$, appears in Section \ref{sec: main thm} where we also give a constructive proof.  
 
 The last two sections of this paper present additional homeomorphisms between non-equal Hessenberg varieties and the implications of those homeomorphisms to the poset $\mathcal{P}_X$. Section \ref{sec: bilateral} proves that the bilateral symmetry of the containment poset of Hessenberg spaces induces a homeomorphism of the corresponding varieties. In Section \ref{Sec: Decomposable} we discuss which Hessenberg varieties are homeomorphic to products of other Hessenberg varieties and conclude with a condition  for when regular nilpotent Hessenberg varieties are decomposable in this sense. All of the  indecomposable regular nilpotent Hessenberg varieties are contained in a closed interval of $\mathcal{P}_X$ from the Peterson variety to the full flag variety.

The author would like to thank Julianna Tymoczko and Stephen Oloo for their helpful comments at multiple stages of this work, and Martha Precup for several inspiring and informative conversations.
\section{definitions}
\label{sec: def}
Throughout this paper we will refer to a complex reductive linear algebraic group $G$, a fixed Borel subgroup $B\subseteq G$, and their corresponding Lie algebras $\mathfrak{g}$ and $\mathfrak{b}$, and a root system $\Phi$. This construction also results in a flag variety $G/B$ and a Weyl group $W$.  Many of the proofs and examples in this paper rely on explicit constructions of individual full flags in the flag variety. A (full) flag $F_\bullet$ is sequence of nested subspaces in an ambient $n$ dimensional vector space $V$
$$\{0\} = F_0  \subsetneq F_1 \subsetneq \cdots \subsetneq F_n = V.$$
It will be convenient to think of $G, B$, and $F_\bullet$ as explicitly as possible. Throught this paper when discussing type $A_{n-1}$ Hessenberg varieties we will assume $G= Gl_n(\mathbb{C})$ and $B$ the upper triangular matrices in $G$. Furthermore a flag $F_\bullet$ will be represented as a matrix where the subspace $F_i$ is the span of the first $i$ columns.
\begin{Definition}
\label{def: Hess space}
A strict Hessenberg space is a subspace $H \subseteq \mathfrak{g}$ such that $\mathfrak{b} \subseteq H$ and $[H, \mathfrak{b}]\subseteq H$. 
\end{Definition}

This definition can be relaxed to allow for spaces that do not contain the Borel sub-Lie algebra and as we will see in Theorem \ref{thm: new main thm}, having $\mathfrak{b}$ contained in $H$ has significant implications for what Hessenberg varieties can occur.

There is a natural poset on the collection of  strict Hessenberg spaces ordered by inclusion. The unique maximal element of this poset is $\mathfrak{g}$, the unique minimal element is $\mathfrak{b}$.  Any strict Hessenberg space $H$ can be defined using a subset $\mathcal{M}_H$ of the negative roots in the corresponding root system, i.e.
$$
H = \mathfrak{b} \oplus \bigoplus \limits_{\alpha \in \mathcal{M}_H} \mathfrak{g}_\alpha.
$$
The space $H$ defined as above is a Hessenberg space if and only if  whenever $\alpha \in \mathcal{M}_H$ and another negative root $\beta $ is above $\alpha$ in the root lattice, $\beta$ is also in $\mathcal{M}_H$.

In type $A_{n-1}$ we use two additional ways to describe Hessenberg spaces: Hessenberg functions and matrix shape.  A strict Hessenberg function  $h:[n]\to [n]$ has $h(i)\geq \max\{i, h(i-1) \}$ for all $i$, and the root $-\alpha_i-\alpha_{i+1} -\cdots - \alpha_{j-1}$ is in $\mathcal{M}_H$ if and only if $h(i)\geq j$. As a space of matrices strict Hessenberg spaces have a staircase below the diagonal, below which all entries must be zero. A zero in the $(i,j)^{th}$ entry means that $-\alpha_i-\alpha_{i+1} -\cdots - \alpha_{j-1}$ is {\em not} in $\mathcal{M}_H$.
\begin{Example}{\rm
 \label{ex: Hessenberg spaces1}
There are five strict Hessenberg spaces in $\mathfrak{g}$ in type $A_2$.
{
$$\begin{matrix}&
{H}_1=\mathfrak{b}&
{H}_2&
{H}_3&
{H}_4&
{H}_5 =\mathfrak{g}\\
\\
&
\begin{bmatrix}
*&*&*\\
0&*&*\\
0&0&* \end{bmatrix}
&
\begin{bmatrix}
*&*&*\\
*&*&*\\
0&0&* \end{bmatrix}
&
\begin{bmatrix}
*&*&*\\
0&*&*\\
0&*&* \end{bmatrix}
&
\begin{bmatrix}
*&*&*\\
*&*&*\\
0&*&* \end{bmatrix}
&
\begin{bmatrix}
*&*&*\\
*&*&*\\
*&*&* \end{bmatrix}
\\
\\& \scriptstyle 1\mapsto 1 &\scriptstyle 1\mapsto 2 &\scriptstyle 1\mapsto 1 &\scriptstyle 1\mapsto 2 &\scriptstyle1\mapsto 3 \\
h& \scriptstyle 2\mapsto 2 &\scriptstyle 2\mapsto 2 &\scriptstyle 2\mapsto 3 &\scriptstyle 2\mapsto 3 &\scriptstyle 2\mapsto 3 \\
& \scriptstyle 3\mapsto 3 & \scriptstyle 3\mapsto 3 & \scriptstyle 3\mapsto 3 & \scriptstyle 3\mapsto 3 & \scriptstyle 3\mapsto 3 \\
\\
\mathcal{M}_{H} &
\emptyset &
 \{-\alpha_1\}&
  \{-\alpha_2\}&
  \{-\alpha_1, -\alpha_2\}&
\Phi^-
 \\
 \\
\end{matrix}$$}
}\end{Example}

The introduction of the Hessenberg function $h$ leads to a broader concept of a Hessenberg space.
\begin{Definition}
\label{def: general hess space}
 Given by an non-decreasing function $h: [n] \to [n]$, a (generalized) Hessenberg space is a subspace of $H \subseteq \mathfrak{g}$ given by 
 $$H = \bigoplus \limits_{h(i) \leq j }  \mathfrak{g}_{i,j}.$$
\end{Definition}

We will refer to the spaces defined in the previous two definitions as Hessenberg spaces, using the descriptors primarily to emphasize when strict Hessenberg spaces have special properties. Any one of these definitions of a Hessenberg space can be used to define a Hessenberg variety.
\begin{Definition}
\label{def: Hess var}
Fix a Hessenberg space $H$ and an operator $X \in \mathfrak{g}$. The Hessenberg variety $\Hess(X, H)$ is 
\begin{equation} 
\Hess(X,H) = \{ gB \in G/B : Ad(g^{-1})X \in H\}.
\end{equation}

\end{Definition}
The Hessenberg varieties form a very large family of algebraic varieties and many well-know varieties can be expressed as Hessenbergs. For example the full flag variety $G/B = \Hess(X,\mathfrak{g})$ for any $X\in \mathfrak{g}$ but if $X$ is a regular nilpotent element in $\mathfrak{g}$, then $\Hess(X,\mathfrak{b})$ is a single point.
\begin{Remark}
\label{rem: similar matrices}
An immediate consequence of the definition of a Hessenberg variety is that if $Y = P X P^{-1}$ is similar to $X$ then for any Hessenberg space $H$, $\Hess(X,H)$ is homeomorphic to to $\Hess(Y,H)$ \cite[Prop. 2.7]{LinearConditionsTymoczko}.
\end{Remark}

While the homeomorphism described above will be used in Theroem \ref{thm: flip}, unless otherwise specified all operators $X$ represent their entire similarity classes.  It will be useful for the purpose of explicit calculations to fix a particular regular nilpotent operator. 

\begin{Definition} 
\label{def: reg nil}
Fix a basis $\{E_\alpha \}$ for each subspace $\mathfrak{g}_\alpha \subseteq \mathfrak{g}$. Define a regular nilpotent operator $N$ as
\begin{equation}
N=\sum \limits_{\alpha \in \Delta} E_\alpha .
\end{equation}
\end{Definition}

In type $A_{n-1}$ using the standard basis $\{E_{(i,j)}\}$, the operator $N = \sum \limits_{i=1}^{n-1} E_{(i,i+1)}$ is the Jordan normal form of the nilpotent matrix with only one Jordan block.  

\begin{Example}
Let $G = GL_3(\mathbb{C})$, $B$ be the upper triangular matrices in $G$, and $H$ be the Hessenberg space defined by $h(1) = 2, h(2) = h(3) = 3$ ($H_4$ from Example \ref{ex: Hessenberg spaces1}).  There are number of ways to think about this variety:
\begin{itemize}
\item From the definition, $\Hess(N,H)= \{ gB \in G/B : Ad(g^{-1})N \in H\}. $
\item As flags, $\Hess(N,H) = \{F_\bullet \in G/B : NF_i \in F_{i+1}\}$.
\item Explicitly, these are the flags in the set $$\left\{ \begin{bmatrix} 1&0&0\\0&1&0\\ 0&0&1 \end{bmatrix},\begin{bmatrix} a&1&0\\1&0&0\\ 0&0&1 \end{bmatrix}, \begin{bmatrix} 1&0&0\\0&a&1\\ 0&1&0 \end{bmatrix},\begin{bmatrix} a&b&1\\b&1&0\\ 1&0&0 \end{bmatrix}\right\}.$$
\item Geometrically $\Hess(N,H)$ consists of two copies of $\mathbb{C}$ and one copy of $\mathbb{C}^2$  joined together at a single point.
\end{itemize}
\end{Example}
The variety $\Hess(N,H)$ in the previous example is the type $A_2$ Peterson variety.  Peterson varieties are the best studied regular nilpotent Hessenberg varieties and their singularities \cite{InskoYong} and cohomology \cite{MyPeterson} are understood. In general Lie type, the Peterson variety is the regular nilpotent Hessenberg variety corresponding to the Hessenberg space $$H = \mathfrak{b}  \oplus \bigoplus \limits_{\alpha \in -\Delta} \mathfrak{g}_\alpha.$$
In the sense that will be discussed in Section \ref{Sec: Decomposable} the Peterson variety is the smallest regular nilpotent Hessenberg variety in type $A$.

\section{Posets of Hessenberg Varieties}
\label{sec: poset}

Tymoczko gave the following definition for $X$-equivalence of Hessenberg spaces. 
\begin{Definition}\cite[Def 1.6]{TymozckoPureDimensional}
\label{def: x-equiv}
Two Hessenberg spaces $H_1$ and $H_2$ are called $X$-equivalent if $\Hess(X,H_1) = \Hess(X,H_2)$. 
\end{Definition}
There are two immediate occurrences of $X$-equivalence: all strict Hessenberg spaces are ${\bf 1}$-equivalent since $\Hess({\bf 1},H) = G/B$ for any strict Hessenberg space $H$. Similarly all Hessenberg spaces are $\bf 0$-equivalent. The search for non-trivial examples of $X$-equivalent strict Hessenberg spaces led the author to the main theorem of this paper, namely that no such examples exist.  However if we loosen our study to include all (generalized) Hessenberg spaces many non-trivial examples appear.
\begin{Example}
\label{ex: x-equiv}
Consider the operator $X$ and subspaces $H_1$ and $H_2$ defined by
$$X = \begin{bmatrix}
1&0&0&0\\
0&1&0&0\\
0&0&0&0\\
0&0&0&0\\
\end{bmatrix}, 
\hspace{5mm}
H_1= \begin{bmatrix}
0&*&*&*\\
0&0&*&*\\
0&0&*&*\\
0&0&*&*\\
\end{bmatrix}, 
\hspace{5mm}
H_2 = \begin{bmatrix}
0&0&*&*\\
0&0&*&*\\
0&0&*&*\\
0&0&*&*\\
\end{bmatrix}.$$

Consider a flag $F_\bullet \in \Hess(X,H_1)$. Using standard basis notation, $F_1$ must be in the span of $\{e_3, e_4\}$ since it must be in the null space of $X$. But since $XF_2$ must also be a subspace of the span of $\{e_3, e_4\}$, the subspace $F_2$ must itself be contained in that same span. There are no constraints on $F_3$ or $F_4$.  This flag $F_\bullet$ is, however, also contained in $\Hess(X,H_2)$ since $F_2$ is in the null space of $X$.  Therefore $\Hess(X,H_1) = \Hess(X,H_2)$.

\end{Example}

The collection of Hessenberg subspaces of any Lie algebra naturally form a poset ordered by containment. In type $A_{n-1}$ this poset is isomorphic to the closed interval in Young's lattice from $\emptyset$  to $n^n$ with the strict Hessenberg spaces making up the interval from $\emptyset$ to $(n, n-1, ... ,1)$. This can be seen using the notation in Example \ref{ex: Hessenberg spaces1} and reading each of the zeros as a box in the Young diagram using the French notation:

\begin{center}
\begin{tikzpicture}
\node[left] at (-1,0) {$\begin{bmatrix}
*&*&*\\
0&*&*\\
0&0&* \end{bmatrix}$}; 
\node[right] at (1,0){ \ydiagram[]{1,2}
}; 
\draw[<->, thick] (-.5, 0 )--(.5,0);
 \end{tikzpicture}
\end{center}
It will sometimes be notationally convenient to identify a Hessenberg space $H_\lambda$ by the corresponding Young diagram $\lambda$.
This identification leads immediately to an ordering on families of Hessenberg varieties.
\begin{Definition} For any $X\in \mathfrak{g}$ define the poset $P_X$ on the Hessenberg varieties $\Hess(X,H)$ by containment, i.e.  $\Hess(X,H_1)\preceq  \Hess(X,H_2)$ if the variety $\Hess(X,H_1) $ is contained in the variety $  \Hess(X,H_2)$.
\end{Definition}
This poset of Hessenberg varieties inherits some of the structure of the containment poset on Hessenberg spaces.
\begin{Lemma}
If $H_1 \subseteq H_2$ are Hessenberg spaces then $\Hess(X, H_1) \preceq \Hess(X,H_2)$ for every $X \in \mathfrak{g}$.
\end{Lemma}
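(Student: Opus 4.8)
The plan is to simply unwind Definition~\ref{def: Hess var} and observe that an inclusion of parameter spaces passes verbatim to an inclusion of the associated flag loci. Concretely, I would fix $X \in \mathfrak{g}$ and take an arbitrary point $gB \in \Hess(X, H_1)$; by definition this means $Ad(g^{-1})X \in H_1$. Since $H_1 \subseteq H_2$ as subspaces of $\mathfrak{g}$, we immediately get $Ad(g^{-1})X \in H_2$, hence $gB \in \Hess(X, H_2)$. As $gB$ was arbitrary, $\Hess(X, H_1) \subseteq \Hess(X, H_2)$ as subsets — and in fact as subvarieties — of $G/B$, which is precisely the relation $\Hess(X, H_1) \preceq \Hess(X, H_2)$ in $P_X$. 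One small point worth stating explicitly is that this argument is uniform in $X$, so it yields the conclusion for every $X \in \mathfrak{g}$ at once, and it shows that $H \mapsto \Hess(X,H)$ is an order-preserving surjection from the containment poset of Hessenberg spaces onto $P_X$; thus $P_X$ is exactly the image of that poset under this map, with the order induced by containment of varieties.

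There is essentially no obstacle here: the entire content lives in the set-level description $\Hess(X,H) = \{gB : Ad(g^{-1})X \in H\}$, and the inclusion $H_1 \subseteq H_2$ does all the work. The genuinely hard direction — deciding when the inclusion of varieties is an \emph{equality}, i.e.\ when distinct Hessenberg spaces can be $X$-equivalent — is not addressed by this lemma and is exactly what Theorem~\ref{thm: new main thm} takes up. So here I would simply present the two-line argument above, perhaps remarking that it also immediately recovers the extreme cases already noted (for instance $\Hess(X,\mathfrak b)\preceq\Hess(X,H)\preceq\Hess(X,\mathfrak g)=G/B$ for any strict Hessenberg space $H$).
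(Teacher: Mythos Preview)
Your argument is correct and matches the paper's proof essentially verbatim: both simply observe that $Ad(g^{-1})X \in H_1 \subseteq H_2$ forces $gB \in \Hess(X,H_2)$. The additional remarks you make about the order-preserving surjection and the extreme cases are fine elaborations but go beyond what the paper records for this lemma.
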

\begin{proof}
 if $H_1 \subseteq H_2$ then $\Hess(X,H_1) \preceq  \Hess(X,H_2)$ since if $Ad(g^{-1})X \in H_1$ and $H_1 \subseteq H_2$ then $Ad(g^{-1})X \in H_2$. 
\end{proof}
So while it may be clear when one Hessenberg variety precedes another in $\mathcal{P}_X$ it is less clear when, or if, the poset $\mathcal{P}_X$ will be isomorphic to the interval $[ \emptyset, n^n]$ of Young's lattice. We demonstrate with two examples.

\begin{Example}
\label{ex: big poset proj}
Let $X = \begin{bmatrix} 1& 0 \\ 0& 0 \end{bmatrix}$. Let $eB$ and $s_1B$ be the elements in $GL_2(\mathbb{C})/B$ given by the identity matrix and the inversion $(1,2)$ respectively. Then the five (generalized) Hessenberg spaces give the varieties:
\begin{itemize}
\item $\Hess(X, H_\emptyset) = Gl_2(\mathbb{C})/B$
\item $\Hess(X,H_{(1)}) = \left \{ eB, s_1B  \right \}$
\item $\Hess(X,H_{(1,1)}) = \left \{ s_1B \right \}$
\item $\Hess(X,H_{(2)}) = \left \{ eB  \right \}$
\item $\Hess(X,H_{(2,1)}) = \Hess(X,H_{(2,2)}) = \emptyset$
\end{itemize}
\end{Example}
\begin{Example}
\label{ex: big poset nil}
Let $X = \begin{bmatrix} 0& 1 \\ 0& 0 \end{bmatrix}$. Again let $eB$  be the element in $GL_2(\mathbb{C})/B$ described above. Then the five (generalized) Hessenberg spaces give the varieties:
\begin{itemize}
\item $\Hess(X, H_\emptyset) = Gl_2(\mathbb{C})/B$
\item $\Hess(X,H_{(1)}) =\Hess(X,H_{(1,1)}) =  \Hess(X,H_{(2)}) = \Hess(X,H_{(2,1)}) = \left \{ eB  \right \}$
\item $ \Hess(X,H_{(2,2)}) = \emptyset$
\end{itemize}
\end{Example}

We visualize the posets $\mathcal{P}_X$ for these two examples in Figure \ref{fig: example px}. For aesthetic reasons we identify the variety $\Hess(X,H_\lambda)$ to the diagram $\lambda$.  While these examples are small, observe that no two varieties in the interval from $\emptyset$ to $\tiny \ydiagram[]{1}$ are equal. These are the varieties defined by strict Hessenberg spaces.
\begin{figure}[t]
\caption{Two posets $\mathcal{P}_X$. Here $\lambda$ denotes the Hessenberg variety $\Hess(X, H_\lambda)$.}
\label{fig: example px}
\begin{subfigure}[b]{0.3\textwidth}
\begin{center}
\begin{tikzpicture}
\node at (0,0) {$\emptyset$};
\node at (0,-1) {\tiny \ydiagram[]{1}};
\node at (-1,-2) {\tiny \ydiagram[]{1,1}};
\node at (1,-2) {\tiny \ydiagram[]{2}};
\node  at (0,-3.25) {\tiny \ydiagram[]{1,2} \, = \, \ydiagram[]{2,2}};
\draw [] (0,-.25)- - (0,-.75);
\draw [] (-.25, -1.25)-- (-.75,-1.75);
\draw [] (.25, -1.25)-- (.75,-1.75);
\draw [] (-.25, -2.75)-- (-.75,-2.25);
\draw [] (.25, -2.75)-- (.75,-2.25);
\end{tikzpicture}
\end{center}
\caption{$X = { \begin{bmatrix} 1&0\\0&0 \end{bmatrix}}$.}
\end{subfigure}
\hspace{.2 \textwidth}
\begin{subfigure}[b]{0.3\textwidth}
\begin{center}
\begin{tikzpicture}
\node at (0,0) {$\emptyset$};
\node at (0,-1.5) {\tiny \ydiagram[]{1,1} \, = \, \ydiagram[]{1}  \,=\, \tiny \ydiagram[]{2} \, = \, \ydiagram[]{1,2} };
\node  at (0,-3.35) {\ydiagram[]{2,2}};

\draw [] (0,-.25)- - (0,-.75);

\draw [] (0,-1.75)- - (0,-2.75);
\end{tikzpicture}
\end{center}
\caption{$X ={ \begin{bmatrix} 0&1\\0&0 \end{bmatrix}}$.}
\end{subfigure}

\end{figure}
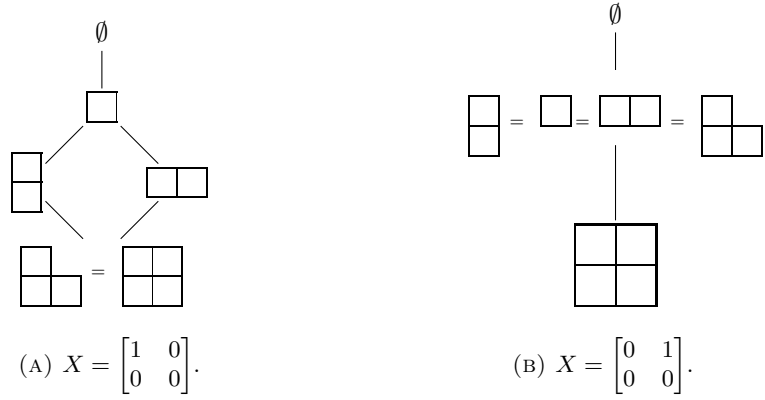

\section{Main Theorem}
\label{sec: main thm}
In this section we prove our main theorem, that no two strict Hessenberg spaces can be $X$-equivalent unless $X$ is a multiple of the identity element of $\mathfrak{g}$. 

\begin{Theorem}
\label{thm: new main thm}
Let $X\in \mathfrak{g}$ be an element that is not equal to $a  \bf 1$, and $H_1,H_2$ be distinct strict Hessenberg spaces. Then $\Hess(X,H_1) \neq \Hess(X,H_2)$.
\end{Theorem}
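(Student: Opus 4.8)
The plan is to prove the contrapositive in a strong form: given two distinct strict Hessenberg spaces $H_1 \subsetneq H_2$ (we may assume containment after possibly swapping, since if neither contains the other we can replace $H_2$ by $H_1 + H_2$, which is again a strict Hessenberg space strictly containing $H_1$), I will exhibit an explicit flag $gB$ lying in $\Hess(X,H_2)$ but not in $\Hess(X,H_1)$, unless $X = a\mathbf{1}$. By the characterization in Definition~\ref{def: Hess space}, $\mathcal{M}_{H_2}\setminus\mathcal{M}_{H_1}$ contains a negative root that is maximal in the root-lattice order among roots not in $\mathcal{M}_{H_1}$; in type $A_{n-1}$ this corresponds to a single box added to the staircase matrix shape, say in position $(k+1,k)$ (a single sub-diagonal entry), so that $H_2$ allows the $(k+1,k)$ entry to be nonzero while $H_1$ forces it to be zero. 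The whole problem then reduces to the $2\times 2$ "window" at rows/columns $k,k+1$: I want a flag whose $k$-th step is chosen so that $Ad(g^{-1})X$ has a nonzero entry exactly in that one forbidden spot.

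Concretely, the key step is this. Write the relevant $2\times 2$ principal submatrix behaviour: a flag $gB$ lies in $\Hess(X,H)$ iff $X F_i \subseteq F_{h(i)}$ for all $i$, so the only constraint that distinguishes $H_1$ from $H_2$ is at $i=k$: membership in $\Hess(X,H_1)$ requires $X F_k \subseteq F_{k}$-type containment (one step tighter) while $\Hess(X,H_2)$ only requires $XF_k\subseteq F_{k+1}$. I would build the flag $F_\bullet$ greedily so that $F_1\subsetneq\cdots\subsetneq F_{k-1}$ already satisfy all necessary containments for $H_2$ and then choose $F_k$ and $F_{k+1}$ so that $XF_j\subseteq F_{j+1}$ holds for all $j$ but $XF_k\not\subseteq F_k$; this last inequality is where the hypothesis $X\neq a\mathbf 1$ is used. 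If $X$ fails to be a scalar, there is some vector $v$ with $Xv\notin \langle v\rangle$, and more is true: one can arrange a full flag refining $\langle v\rangle \subsetneq \langle v, Xv\rangle$ so that the Hessenberg conditions at every index other than the critical one are automatically met, while at the critical index the "off by one" slack is genuinely used. I would make this precise by induction on $n$, peeling off either the top or bottom of the flag depending on where the distinguishing box sits, and invoking Remark~\ref{rem: similar matrices} to first conjugate $X$ into a convenient form (e.g.\ putting the relevant $2\times2$ block into a shape like $\begin{bmatrix} 0&1\\0&0\end{bmatrix}$ or $\begin{bmatrix} \lambda&0\\0&\mu\end{bmatrix}$ with $\lambda\neq\mu$).

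I expect the main obstacle to be the bookkeeping that guarantees the constructed flag violates \emph{only} the intended condition and no other: since enlarging $F_k$'s image under $X$ can cascade into the containments $XF_{k+1}\subseteq F_{h(k+1)}$, $XF_{k+2}\subseteq F_{h(k+2)}$, etc., I need to choose the later steps $F_{k+1},\dots,F_n$ carefully (using the fact that $h$ is non-decreasing and $h(i)\ge i$, so there is always room) to absorb the extra vector. The cleanest way to handle this is probably to choose $F_k$ so that $X F_k \subseteq F_{k+1}$ with the new vector landing exactly in the $(k{+}1)$-st coordinate direction, and to take $F_{k+1},\dots,F_n$ to be coordinate subspaces in an order that keeps every subsequent Hessenberg inequality satisfied — essentially a rank/flag-completion argument. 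A secondary subtlety is the reduction "neither contains the other $\Rightarrow$ take $H_1+H_2$": I should note that $H_1 + H_2$ is still a strict Hessenberg space (the union of the two box-sets is still an order ideal in the root poset, hence corresponds to a valid $\mathcal{M}$), so it suffices to treat covering pairs $H_1 \lessdot H_2$ in the Hessenberg-space poset, i.e.\ the single-box case described above. Once the single-box case is done, the general case of arbitrary distinct $H_1,H_2$ follows by comparing each to $H_1\cap H_2$ or $H_1+H_2$ and chaining.
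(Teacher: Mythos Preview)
Your overall strategy---isolate a single ``missing box'' and then build an explicit flag that detects exactly that box---is the same as the paper's. However, there is a genuine gap in your reduction: a covering pair $H_1 \lessdot H_2$ of strict Hessenberg spaces does \emph{not} in general differ by a box on the first sub-diagonal. The root you select (maximal among negative roots not in $\mathcal{M}_{H_1}$) corresponds to a matrix position $(j,i)$ with $j > i$ arbitrary; it lies on the sub-diagonal only when the simple root $-\alpha_i$ itself is still missing from $\mathcal{M}_{H_1}$. For instance, the cover $H_4 \lessdot H_5 = \mathfrak{g}$ in Example~\ref{ex: Hessenberg spaces1} adds the box at position $(3,1)$, and more generally once $H_1$ contains the Peterson Hessenberg space no addable box is ever on the first sub-diagonal. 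Consequently the flag you sketch, whose distinguishing condition is $XF_k \subseteq F_{k+1}$ versus $XF_k \subseteq F_k$, does not handle the general case; the ``off by one'' slack must be replaced by an ``off by $j-i$'' slack, and your vector $v$ with $Xv\notin\langle v\rangle$ must instead be placed so that $Xv$ lands $j-i$ steps further along the flag.

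The paper fixes this by constructing, for \emph{every} pair $i < j$, a flag $A_{i,j}$ that lies in $\Hess(X,H)$ if and only if $h(i) \geq j$. The required properties (Lemma~\ref{lem: identify a flag}) are $XF_\ell \subseteq F_\ell$ for $\ell < i$ and $\ell \geq j$, together with $XF_i \subseteq F_j$ but $XF_i \not\subseteq F_{j-1}$. The explicit construction puts $X$ in Jordan form and then permutes standard basis vectors so that a single non-eigenvector (either $e_2$ from a Jordan block of size $\geq 2$, or $e_1+e_2$ when $X$ is diagonal with distinct eigenvalues) sits in column $i$ while its preimage under the shift appears only in column $j$; this is precisely the bookkeeping you anticipate being the obstacle, and the cascade you worry about is avoided because all other columns are eigenvectors. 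Your inductive ``peel off top or bottom'' idea could plausibly be adapted to the general $(i,j)$ case, but as written it covers only $j=i+1$.

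A smaller point: the reduction ``replace $H_2$ by $H_1 + H_2$'' does not by itself yield $\Hess(X,H_1) \neq \Hess(X,H_2)$, only $\Hess(X,H_1) \neq \Hess(X,H_1+H_2)$. It is cleaner (and is what the paper implicitly does) to skip the covering-pair reduction entirely: once you have a detecting flag $A_{i,j}$ for each $(i,j)$, pick any root in $\mathcal{M}_{H_1} \triangle \mathcal{M}_{H_2}$ and use its flag directly.
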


We will prove this directly by showing that for each negative root $\alpha \in \Phi^-$ we can explicitly construct a flag that is in $\Hess(X,H)$ if and only if $\alpha \in \mathcal{M}_H$.  Since we are in type $A$, root $\alpha = -\alpha_i - \alpha_{i+1} -\cdots -\alpha_j  \in \mathcal{M}_H$ is equivalent to $H$ having Hessenberg function with $h(i)\geq j$. Before constructing a general proof, we state a useful lemma and give an illustrative example.

\begin{Lemma}
\label{lem: identify a flag}
Let $X\in \mathfrak{g}$ be an operator, $i\leq j \leq n$, and $F_\bullet$ a flag in $\mathbb{C}^n$ with the following properties:
\begin{enumerate}
\item $XF_k \subseteq F_k$ whenever $k <i$ or $k >j$
\item $X F_i \subseteq F_j$
\item $X F_i $ is not a subspace of $ F_{j-1}$.
\end{enumerate}
Then the flag $F_\bullet$ is in $\Hess(X,H)$ if and only if the Hessenberg function for $H$ has $h(i) \geq j$.

\end{Lemma}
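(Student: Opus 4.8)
The plan is to unwind the membership condition for $\Hess(X,H)$ into the flag language of Section~\ref{sec: def} and then check it one index at a time. Recall that, writing $F_\bullet$ for the flag attached to $gB$, one has $gB \in \Hess(X,H)$ precisely when $XF_k \subseteq F_{h(k)}$ for every $k \in [n]$, where $h$ is the Hessenberg function of $H$, and that $h$ is non-decreasing with $h(k)\ge k$. So the lemma says that, under hypotheses (1)--(3), the single inequality $h(i)\ge j$ decides whether all $n$ of the containments $XF_k\subseteq F_{h(k)}$ hold simultaneously.

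For the ``only if'' direction, hypothesis (3) does the work. If $F_\bullet \in \Hess(X,H)$ then in particular $XF_i \subseteq F_{h(i)}$; were $h(i)\le j-1$, the nesting of the flag would give $F_{h(i)}\subseteq F_{j-1}$ and hence $XF_i \subseteq F_{j-1}$, contradicting (3). Therefore $h(i)\ge j$.

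For the ``if'' direction, assume $h(i)\ge j$ and verify $XF_k\subseteq F_{h(k)}$ for each $k$ in turn. When $k<i$ or $k>j$, hypothesis (1) combined with $h(k)\ge k$ gives $XF_k\subseteq F_k\subseteq F_{h(k)}$. When $i\le k\le j$, monotonicity of $h$ gives $h(k)\ge h(i)\ge j$, so it is enough to show $XF_k\subseteq F_j$: for $k=i$ this is exactly hypothesis (2), and for the remaining indices one uses the flag nesting $F_i\subseteq F_k\subseteq F_j$ together with hypotheses (1) and (2) to push $XF_k$ into $F_j$, after which $XF_k\subseteq F_j\subseteq F_{h(k)}$ as required. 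I expect this last point --- controlling $XF_k$ for the indices strictly between $i$ and $j$, which are not pinned down directly by the hypotheses and force one to invoke the completeness of the flag and the monotonicity of $h$ --- to be the main obstacle; once those indices are handled, collecting the cases gives $F_\bullet\in\Hess(X,H)$ and the equivalence follows.
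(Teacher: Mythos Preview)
Your ``only if'' direction is clean and matches the paper's argument. The trouble is exactly where you flag it: the indices $i < k \le j$. You claim that the flag nesting $F_i\subseteq F_k\subseteq F_j$ together with hypotheses (1) and (2) pushes $XF_k$ into $F_j$, but this does not follow. Nesting gives only $XF_i\subseteq XF_k$, which points the wrong way, and hypothesis (1) is silent on these $k$ (note it requires $k>j$, not $k\ge j$). In fact the stated hypotheses do not control $XF_k$ for $i<k\le j$ at all, and the lemma is false as written: take $n=4$, $i=1$, $j=3$, $X=E_{31}+E_{42}$, and the standard flag $F_\bullet$. Conditions (1)--(3) hold, yet $XF_2=\langle e_3,e_4\rangle\not\subseteq F_3$, so $F_\bullet\notin\Hess(X,H)$ for $h=(3,3,3,4)$ even though $h(1)\ge 3$.

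The paper's own proof says only ``by definition'' for this direction and thus glosses over the same gap. What rescues the application is that the flags actually constructed in the proof of Theorem~\ref{thm: new main thm} satisfy a stronger set of conditions --- explicitly $XF_k\subseteq F_k$ for $k<i$ or $k\ge j$, and $XF_k\subseteq F_j$ for $i\le k\le j-1$ --- and under those strengthened hypotheses your case analysis goes through verbatim. So the repair is to the statement, not to your strategy.
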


\begin{proof}
If $h(i) \geq j$ then the flag $F_\bullet$ as defined above is in $\Hess(X,H)$ by definition. If the flag $F_\bullet$ is in $\Hess(X,H)$ then because $XF_i$ is not a subspace of $F_{j-1}$, we must have $h(i) \geq j$. 
\end{proof}

The following example illustrates a flag that satisfies the properties of Lemma \ref{lem: identify a flag}.

\begin{example} Consider the operator
$$X = \begin{bmatrix} \lambda_1 &1 & & & & \\
& \lambda_1 &1 & & & \\
&&  \lambda_1 &  & & \\
  &&& \lambda_2 &1  & \\
    &&&&\lambda_2 &  \\
     &&&&& \lambda_3  \\
     \end{bmatrix}.$$
     
     To construct a flag which is contained in $\Hess(X, H)$ if and only if the Hessenberg function has $h(2) \geq 4$, we need a flag for which $XF_2$ is in $F_4$ but not $F_3$, and has $XF_i \subset F_i$ for $i = 1, 4,5,6$. One such example is 
     $$ A_{2,4} = \begin{bmatrix} e_4, e_2, e_5, e_1, e_6, e_3 \end{bmatrix} .$$
   Note that $e_1, e_4,$ and $e_6$ are eigenvectors of $X$ but $e_2$, which is contained in $F_2$, is not. The flag $A_{2,4}$ is in $\Hess(X,H)$ if and only if $h(2) \geq 4$.
     Using the same $X$,but $h(5) = 6$ we can build the flag
     $$ A_{5,6} = \begin{bmatrix} e_4, e_5, e_6, e_1, e_3, e_2 \end{bmatrix} $$ which is in $\Hess(X,H)$ if and only if $h(5) \geq 6$.
\end{example}

We now give a method for constructing such flags $A_{i,j}$ for any given operator $X \neq a {\bf 1}$. This construction proves the main theorem of this paper.

\begin{proof}[Proof of Theorem \ref{thm: new main thm}]
Without loss of generality we may assume that $X$ is an $n \times n$ matrix in Jordan normal form with blocks of size $\mu_1\geq \mu_2 \geq \cdots \mu_m \geq 1$.  Let $\{e_i\}_{i \leq n}$ be the standard basis of $\mathbb{C}^n$. Since $X$ is in Jordan normal form, $Xe_i \in {\rm span} \{e_i, e_{i-1}\}$ for all $i$. In order to construct our flags explicitly, we need to consider two separate cases: when $\mu_1 > 1$ and when $\mu_1 = 1$.

{\em The operator $X$ has a Jordan block of size $\mu_1 >1$.}
If the first Jordan block of $X$ has size $\mu_1>1$ then $X e_2 \not \in {\rm span} \{ e_2 \}$ but $Xe_2$ is in the span of $\{e_1, e_2\}$. Similarly for all $k = 2,3,...,\mu_1$, $X e_k $ is in the span of $\{e_{k-1}, e_k\}$ but not $\{ e_k \}$.
To build a flag $F_\bullet$  that has the properties required by Lemma \ref{lem: identify a flag}, first we reindex the basis vectors as $\{v_k\}_{ k \leq n}$ by
$$v_k = \begin{cases} 
e_{k+\mu_1}  &\text{ if } k\leq n-\mu_1\\
e_{k -n +\mu_1+3}  &\text{ if } k\geq n-\mu_1\\
e_2 & \text{ if } k = n-1 \\
  e_1 & \text{ if } k = n. \\
\end{cases}$$
This re-ordering has the property that unless $k = n-\mu_1$ or $n-1$, then $Xv_k $ is in the span of  $\{v_k, v_{k-1}\}$.
If $i\leq n-\mu_1$ define $F_\bullet$ by the matrix $A\in Gl_n(\mathbb{C})$ with $k^{th}$ column $A_k$ given by:
$$A_k = \begin{cases}
v_k & \text{ if } k<i \\
v_{n-1} = e_{2} & \text{ if } k=i \\
v_{k-1} & \text{ if } i< k <j\\
v_n = e_{1} & \text{ if } k=j \\
v_{k-2} & \text{ if }j<k.\\
\end{cases}$$
Since $i$ is less than $n-\mu_1$, we have that $X F_k \subset F_k$ unless $i \leq k \leq {j-1}$, in which case $XF_k$ is contained in $F_j$ but not $F_{j-1}$. \\
\\
If $i> n-\mu_1$ define $F_\bullet$ by the matrix $A\in Gl_n(\mathbb{C})$ with $k^{th}$ column $A_k$ given by:
$$A_k = \begin{cases}
v_k & \text{ if } k \leq n-\mu_1 \\
e_{k-n+\mu_1} & \text{ if } n- \mu_1< k <i \\
e_{k-n+\mu_1+1} & \text{ if } i \leq k =  <j \\
e_{{i-n+\mu_1}} & \text{ if } k=j \\
e_{k-n+\mu_1} & \text{ if } j < k .  \\
\end{cases}$$

Again the flag defined by $A$ has all the properties required by Lemma \ref{lem: identify a flag} and thus is contained in $\Hess(X,H)$ if and only if the Hessenberg function of $H$ has $h(i) \geq j$. \\
\\
{\em All Jordan blocks of $X$ are size $1$.}
If all Jordan blocks are size $1$ and $X$ is not a multiple of the identity matrix, then there are two standard basis vectors that have different eigenvalues $\lambda_1$ and $\lambda_2$. Without loss of generality we may assume that those are vectors $e_1$ and $e_2$. 

In a similar fashion to the case where $X$ has a Jordan block of size greater than one, we reorder the standard basis as follows and create a flag $A$ using a matrix in which the $k^{th}$ column $A_k$ is given by:

$$A_k = \begin{cases}
e_{k+2} & \text{ if } k<i \\
e_1+ e_{2} & \text{ if } k=i \\
e_{k+1} & \text{ if } i< k < j \\
e_1  & \text{ if } k = j \\
e_k  & \text{ if }  j < k. \\
\end{cases}$$
This flag is contained in $\Hess(X,H)$ if and only if $h(i) \geq j$, completing the proof.

\end{proof}

Unsurprisingly, the constructions above point to methods for constructing large families of flags that are in a variety $\Hess(X,H)$ if and only if $h(i) \geq j$, however the particular flags constructed have several advantages: if $X$ has a nilpotent part, then the flag $A$ is a permutation matrix. Furthermore if $X$ is actually a nilpotent operator then $A$ is a fixed point of the one dimensional torus action on $\Hess(X,H)$. 

\section{An Involution on $\mathcal{P}_X$}
\label{sec: bilateral}
The bilateral symmetry that the poset of type $A$ Hessenberg spaces inherits from Young's lattice leads to a natural question: does that bilateral symmetry have implications for the Hessenberg varieties? The answer is yes, the respective varieties are homeomorphic to each other. 

\begin{Proposition}
If $H$ is a Hessenberg space and ${}^TH\subseteq \mathfrak{g}$ is the set of matrices in $\mathfrak{g}$ obtained by flipping the matrix form of $H$ across its antidiagonal, then  ${}^TH$ is also a Hessenberg space. 
\end{Proposition}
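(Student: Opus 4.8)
The plan is to translate the ``flip across the antidiagonal'' operation into an explicit linear-algebraic statement and then verify the defining condition of a Hessenberg space directly. Concretely, flipping the matrix of an $n\times n$ operator across its antidiagonal is the map $M \mapsto w_0 M^{\mathsf T} w_0$, where $w_0$ is the antidiagonal permutation matrix (the long element of $W$), since transposing reflects across the main diagonal and conjugating by $w_0$ then reverses the order of the rows and columns. On the level of matrix entries, the $(i,j)$ entry of ${}^TH$ is the $(n+1-j,\, n+1-i)$ entry of $H$. So I would first record this identity, and note in particular that it sends the diagonal to the diagonal and sends strictly-upper-triangular positions to strictly-upper-triangular positions; hence $\mathfrak{b} \subseteq {}^TH$ automatically because $\mathfrak b\subseteq H$.

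Next I would set up the root-theoretic bookkeeping. For a Hessenberg space $H$ with function $h$, the position $(i,j)$ with $i>j$ lies in $H$ iff $j \geq h(i)$ is false in the strict convention — more usefully, using $\mathcal M_H$, the root $-\alpha_j-\cdots-\alpha_{i-1}$ (for $i>j$) is in $\mathcal M_H$ iff the $(i,j)$ entry is free. Under the antidiagonal flip, the $(i,j)$ entry maps to the $(n+1-j,n+1-i)$ entry, and since $i>j \iff n+1-j > n+1-i$, below-diagonal positions go to below-diagonal positions; moreover the ``length'' $i-j$ of the corresponding root is preserved, and the map $i\mapsto n+1-i$ on indices is exactly the diagram automorphism $\alpha_i \mapsto \alpha_{n-i}$ of the Dynkin diagram of type $A_{n-1}$. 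So the flip is precisely the action on $\Phi^-$ induced by $-w_0$, and it is an automorphism of the root poset (it reverses nothing about the covering relations except relabeling). I would therefore phrase the verification as: $H$ is a Hessenberg space iff $\mathcal M_H$ is an order ideal complement — i.e. closed upward — in $\Phi^-$; since $\mathcal M_{{}^TH}$ is the image of $\mathcal M_H$ under a poset automorphism of $\Phi^-$, it too is closed upward, so ${}^TH$ is a Hessenberg space.

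Alternatively, and perhaps more cleanly for the paper's conventions, I would argue in terms of Hessenberg functions directly: if $h$ is the Hessenberg function of $H$, then one checks that ${}^TH$ has Hessenberg function $h'(i) = n+1 - h^{-1}(\,\cdot\,)$-type formula — precisely $h'(k) = n - \min\{ i : h(i) \geq n+1-k \} + 1$, or equivalently $h'$ is obtained from $h$ by the standard ``complement-and-reverse'' operation that is well known to preserve the Hessenberg condition $h'(k)\geq \max\{k, h'(k-1)\}$. I would verify this inequality from the monotonicity of $h$; it is a short, purely combinatorial computation.

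The main obstacle, such as it is, is bookkeeping with the index reversal: getting the off-by-one and the direction of the inequality right in the formula for $h'$, and making sure the antidiagonal flip really is conjugation-by-$w_0$-then-transpose rather than one or the other alone. Once the correct formula $h \mapsto h'$ is in hand, checking it defines a Hessenberg function is immediate, and the generalized (non-strict) case needs no extra argument since the same entrywise correspondence applies verbatim (the condition there is just that $h'$ is non-decreasing, which is clear). I would also remark that, combined with Remark \ref{rem: similar matrices} and the observation that $\Hess(X, {}^TH)$ is carried to $\Hess(w_0 X^{\mathsf T} w_0, H)$ by the homeomorphism $gB \mapsto w_0 (g^{\mathsf T})^{-1} w_0 B$ of $G/B$, this Proposition is exactly the input needed for the homeomorphism of varieties promised in the section introduction.
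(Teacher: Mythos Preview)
Your argument is correct, but the paper's own proof is a single line that leverages the Young-diagram correspondence set up in Section~\ref{sec: poset}: since every Hessenberg space is $H_\lambda$ for some Young diagram $\lambda$, and the antidiagonal flip of the matrix shape sends $H_\lambda$ to $H_{\lambda^T}$, one simply observes that $\lambda^T$ is again a Young diagram. By contrast, you unwind the flip explicitly as $M\mapsto w_0 M^{\mathsf T} w_0$, track the entry permutation $(i,j)\mapsto (n{+}1{-}j,\,n{+}1{-}i)$, and then verify the Hessenberg condition either via the Dynkin automorphism $\alpha_i\mapsto\alpha_{n-i}$ acting on $\mathcal M_H$ or via an explicit formula for the new Hessenberg function $h'$. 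Your approach is longer but more self-contained and makes visible structure the paper leaves implicit---in particular, the identification of the flip with the $-w_0$ automorphism of the root system suggests how the statement would generalize outside type $A$, and your closing remark about the map $gB\mapsto w_0(g^{\mathsf T})^{-1}w_0 B$ is exactly the homeomorphism the paper uses in the next theorem. The paper's version buys brevity by cashing in the earlier identification with Young's lattice; yours buys transparency and portability at the cost of some bookkeeping.
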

\begin{proof}
If $H= H_\lambda$ for some Young diagram $\lambda$ then ${}^TH = H_{\lambda^T}$ which is also a Hessenberg space.
\end{proof}
Matrix manipulation shows that, when $w_0$ is the permutation matrix with ones on the antidiagonal, which corresponds to the longest word in $W$, any $n\times n$ matrix $M$ can be flipped along its antidiagonal by taking its transpose and conjugating by the longest word:
\begin{equation}
\label{eq: antidiagonal matrix flip}
{}^TM=w_0 M^T w_0
\end{equation}

and so for Hessenberg spaces:
\begin{equation}
\label{eq: h flip}
{}^TH = w_0 H^T w_0.
\end{equation}
The fact that both taking the transpose and conjugating by an element of the Weyl group are homeomorphisms of the Lie algebra will be useful in the proofs of this section's theorems.
\begin{theorem}
\label{thm: flip}
Let ${H_\lambda}\subseteq GL_n(\mathbb{C})$ be a Hessenberg space and let $ {}^T{H} = H_{\lambda^T}$ be the Hessenberg space obtained by flipping along the antidiagonal.  Then for any $X\in \mathfrak{g}$
$$\Hess(X,{H})\cong \Hess(X, {}^T{H}).$$
\end{theorem}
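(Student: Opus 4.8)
The plan is to exhibit an explicit homeomorphism $G/B \to G/B$ that restricts to a bijection between $\Hess(X,H)$ and $\Hess(X,{}^TH)$. Equation \eqref{eq: h flip} tells us that ${}^TH = w_0 H^T w_0$, so the natural candidate is a map built from the transpose-inverse automorphism of $GL_n$ composed with conjugation by $w_0$. Concretely, recall that $g \mapsto (g^{-1})^T =: g^{-\top}$ is an algebraic automorphism of $GL_n(\mathbb{C})$ sending $B$ (upper triangular) to the lower triangular Borel $B^-$, and that conjugation by $w_0$ sends $B^-$ back to $B$. So the composite $\phi(g) = w_0 g^{-\top} w_0$ is an automorphism of $GL_n(\mathbb{C})$ preserving $B$, hence it descends to a homeomorphism $\bar\phi : G/B \to G/B$ (indeed an isomorphism of varieties).

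The key computation is then to check that $gB \in \Hess(X, {}^TH)$ if and only if $\bar\phi(gB) \in \Hess(X,H)$, possibly after first replacing $X$ by a similar matrix so that Remark \ref{rem: similar matrices} lets us ignore the difference. The condition $\bar\phi(gB) = \phi(g)B \in \Hess(X,H)$ reads $\phi(g)^{-1} X \phi(g) \in H$. Writing $\phi(g) = w_0 g^{-\top} w_0$ and using $w_0^{-1} = w_0$, this is $w_0 g^{\top} w_0 \, X \, w_0 g^{-\top} w_0 \in H$, i.e. $g^{\top} (w_0 X w_0) g^{-\top} \in w_0 H w_0$. Taking the transpose of the left-hand side (transpose is a homeomorphism of $\mathfrak g$ and $A \in H \iff A^T \in H^T$) gives $g^{-1} (w_0 X w_0)^T g \in w_0 H^T w_0 = {}^TH$ by \eqref{eq: h flip}... wait — I want this to land on $\Hess(X,{}^TH)$ with the \emph{same} $X$. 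So the honest statement is: $\phi(g)B \in \Hess(X,H)$ iff $g^{-1} Y g \in {}^TH$ where $Y = (w_0 X w_0)^T = w_0 X^T w_0 = {}^TX$. Since ${}^TX$ is obtained from $X$ by transposing and conjugating by a permutation matrix, it is similar to $X^T$, which in turn is similar to $X$ (every matrix is similar to its transpose). Thus $\Hess({}^TX, {}^TH) \cong \Hess(X, {}^TH)$ by Remark \ref{rem: similar matrices}, and we have shown $\bar\phi$ carries $\Hess({}^TX,{}^TH)$ bijectively onto $\Hess(X,H)$. Combining the two homeomorphisms yields $\Hess(X,H) \cong \Hess(X,{}^TH)$.

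I would carry out the steps in this order: (1) define $\phi$ and verify $\phi(B) = B$, so $\bar\phi$ is a well-defined homeomorphism of $G/B$; (2) do the matrix algebra above to translate the membership condition for $\Hess(X,H)$ under $\bar\phi$ into a membership condition for $\Hess({}^TX,{}^TH)$, using \eqref{eq: antidiagonal matrix flip}--\eqref{eq: h flip} and the fact that $A \mapsto A^T$ is a self-homeomorphism of $\mathfrak g$ with $A \in H \iff A^T \in H^T$; (3) invoke Remark \ref{rem: similar matrices} (via $X \sim X^T \sim {}^TX$) to replace ${}^TX$ by $X$; (4) assemble the chain of homeomorphisms. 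The main obstacle is bookkeeping in step (2): keeping straight which conjugations act on the group element, which on $X$, and which on the Hessenberg space, and making sure the $w_0$'s and transposes compose to exactly the antidiagonal flip of \eqref{eq: h flip} rather than some other symmetry. It may be cleaner to first prove the special case where $X$ is already taken in a convenient similar form, but the argument above shows no such reduction is strictly necessary.
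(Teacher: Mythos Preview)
Your proposal is correct and follows essentially the same argument as the paper: define $\phi(g)=w_0 g^{-\top} w_0$ on $G/B$, compute that it intertwines $\Hess(X,H)$ with $\Hess({}^TX,{}^TH)$ via \eqref{eq: h flip}, and then invoke Remark \ref{rem: similar matrices} using $X\sim X^T\sim w_0X^Tw_0$. The only cosmetic difference is that the paper runs the computation in the other direction (starting from $g^{-1}Xg\in H_\lambda$ rather than from $\phi(g)^{-1}X\phi(g)\in H$), but the content is identical.
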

\begin{proof}
Consider the homeomorphism
\begin{equation}
\label{eq: isomorphism}
\begin{matrix}
\phi :& G/B & \to & G/B\\
& gB & \mapsto & w_0(g^T)^{-1} w_0 B.\\
\end{matrix}\end{equation}

Direct computation shows that $g^{-1}Xg \in H_\lambda$ if and only if $w_0 g^T X^T (g^T)^{-1} w_0$ is in $w_0 H^T w_0 = H_{\lambda^T}$ so the map $\phi$ restricts to a homeomorphism of Hessenberg varieties
\begin{equation}
\Hess(X, H_\lambda) \cong \Hess(w_0X^Tw_0, H_{\lambda^T}).
\end{equation}
But we know from Remark \ref{rem: similar matrices} and the fact that $X$ and $w_0 X^Tw_0$ are similar matrices (if $X$ was in Jordan normal form this is just a rearrangement of the blocks) that $\Hess(w_0X^Tw_0, H_{\lambda^T}) \cong \Hess(X, H_{\lambda^T})$. Therefore $\Hess(X, H_{\lambda^T}) \cong \Hess(X, H_{\lambda^T})$ for any $X, H_\lambda$ and the natural involution on $\mathcal{P}_X$ induces a homeomorphism of algebraic varieties.
 \end{proof}
These two varieties are unlikely to be equal if $\lambda \neq \lambda^T$ and $\Hess(X,H_\lambda)$ is non-empty.  In Example \ref{ex: big poset proj} we saw that while both $\Hess(X, H_{(2)})$ and $\Hess(X, H_{(1,1)})$ are a single point, they are different points in $G/B$.

\section{Decomposability: Regular Nilpotent Hessenberg Varieties}
\label{Sec: Decomposable}
The product of two type $A$ flag varieties can be expressed as a Hessenberg variety inside a larger flag variety. For example if $\mathcal{F}_1 = Gl_3(\mathbb{C})/B $ and $\mathcal{F}_2 = Gl_2(\mathbb{C})/B$ and $N$ is the regular nilpotent element given in Definition \ref{def: reg nil} then 
$$\mathcal{F}_1\times \mathcal{F}_2 \cong \Hess(N, H) \subset Gl_5(\mathbb{C})/B$$
where $H$ has Hessenberg function $h(1) = h(2)=h(3) = 3$ and $h(4)=h(5)=5$. In such a situation the structure of the Hessenberg variety is fully determined by the structures of $\mathcal{F}_1$ and $\mathcal{F}_2$.

\begin{Definition}
\label{def: decomposable}

Fix a flag variety $G/B$ and its corresponding Lie algebras $\mathfrak{g}$ and $\mathfrak{b}$.
A Hessenberg variety $\Hess(X,H)$ is decomposable if for some $\mathfrak{g}_1, \mathfrak{g}_2 \subsetneq \mathfrak{g}$  
\begin{itemize}
\item there exist $X_1\in \mathfrak{g}_1$ and  $X_2\in \mathfrak{g}_2$ 
\item there exist $H_1 \subseteq \mathfrak{g}_1$ and  $H_2\subseteq \mathfrak{g}_2$ 

\end{itemize}
such that $$\Hess(X, H) \cong \Hess(X_1,H_1) \times \Hess(X_2, H_2).$$
\end{Definition}
Note that in the example above where the two components into which the Hessenberg variety decomposes are in fact full flag varieties, $H_1 = \mathfrak{g}_1$ and $H_2 = \mathfrak{g}_2$. Definition \ref{def: decomposable} does not preclude the possibility that one of the components is a single point.
\begin{Theorem}[\cite{mythesis}]
\label{thm: decompose a}
Let $\Hess(N,H)$ be a type $A_{n-1}$ regular nilpotent Hessenberg variety. If for some $i<n$, $h(i)=i$ in the Hessenberg function of $H$ then $\Hess(N,H)$ is decomposable.
\end{Theorem}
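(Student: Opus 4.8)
The plan is to show that the condition $h(i) = i$ forces any flag in $\Hess(N,H)$ to restrict to a pair of flags — one on the span of $e_1, \dots, e_i$ and one on the span of $e_{i+1}, \dots, e_n$ — and that $N$ itself respects this block decomposition. First I would observe that $h(i) = i$ together with the non-decreasing, staircase condition on the Hessenberg function forces $h(k) \leq i$ for all $k \leq i$; in matrix terms, the Hessenberg space $H$ has a zero block in rows $i+1, \dots, n$ and columns $1, \dots, i$, so $H$ is block upper-triangular with blocks of size $i$ and $n-i$. Next I would note that the regular nilpotent $N = \sum_{k=1}^{n-1} E_{(k,k+1)}$ has $Ne_k \in \mathrm{span}\{e_{k-1}\}$, so $N$ preserves $\mathrm{span}\{e_1, \dots, e_i\}$; in particular $N$ decomposes as $N_1 \oplus N_2$ with $N_1$ a regular nilpotent on $\mathbb{C}^i$ and $N_2$ a regular nilpotent on $\mathbb{C}^{n-i}$, except that the single entry $E_{(i,i+1)}$ crosses the block boundary. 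This is the first technical point to handle with care: $N$ is \emph{not} literally block-diagonal, but the off-block entry lands in a row ($i$) that is "above" the cut, so $N F_k \subseteq F_{k+1}$ combined with the block structure of $H$ will still be enough.

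The core of the argument is to prove that $F_\bullet \in \Hess(N,H)$ implies $F_i = \mathrm{span}\{e_1,\dots,e_i\}$. Here I would argue as in Example \ref{ex: x-equiv}: the membership condition $Ad(g^{-1})N \in H$, rephrased on flags, says $N F_k \subseteq F_{h(k)}$ for all $k$. Since $h(k) \leq i$ for $k \leq i$, we get $N F_k \subseteq F_i$ for all $k \leq i$. One then shows inductively, using that $N$ is regular nilpotent (so its only invariant subspaces of each dimension under the standard flag are the coordinate subspaces, and more relevantly that $N$ has no eigenvectors outside $\mathrm{span}\{e_1\}$, etc.), that the only $i$-dimensional subspace $F_i$ with $N F_i \subseteq F_i$ — which follows from $N F_i \subseteq F_{h(i)} = F_i$ — is $\mathrm{span}\{e_1,\dots,e_i\}$. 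This is the standard fact that the regular nilpotent Hessenberg variety for $\mathfrak{b}$ is a point, applied to the top block. I expect the cleanest route is: $N F_i \subseteq F_i$ and $\dim F_i = i$ force $F_i$ to be $N$-invariant, and since $N$ restricted to any $i$-dimensional invariant subspace is nilpotent with a single Jordan block (because $N$ on $\mathbb{C}^n$ has a single Jordan block, any invariant subspace is $\ker N^j$ for some $j$), we get $F_i = \ker N^i = \mathrm{span}\{e_1,\dots,e_i\}$.

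Once $F_i$ is pinned down, the flag $F_\bullet$ splits: $F_0 \subset \cdots \subset F_i$ is a full flag on $W_1 := \mathrm{span}\{e_1,\dots,e_i\}$, and $F_i \subset \cdots \subset F_n$ induces a full flag $F_i \subset F_{i+1} \subset \cdots$ on the quotient $\mathbb{C}^n / W_1 \cong W_2 := \mathrm{span}\{e_{i+1},\dots,e_n\}$. I would then check that the Hessenberg conditions decouple: for $k \leq i$ the condition $N F_k \subseteq F_{h(k)}$ is exactly the Hessenberg condition for $(N_1, H_1)$ on $W_1$, where $H_1 = H \cap \mathfrak{gl}_i$ with Hessenberg function $h_1 = h|_{[i]}$; and for $k > i$, working modulo $W_1$ (which is why the stray entry $E_{(i,i+1)}$ is harmless — it maps into $W_1$, hence is zero in the quotient), the conditions become the Hessenberg conditions for $(N_2, H_2)$ on $W_2$ with $h_2(k) = h(k+i) - i$. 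This gives a bijection $\Hess(N,H) \to \Hess(N_1,H_1) \times \Hess(N_2,H_2)$, which one verifies is an isomorphism of varieties (it is a restriction of the evident projection on partial-flag varieties). Taking $\mathfrak{g}_1 = \mathfrak{gl}_i$, $\mathfrak{g}_2 = \mathfrak{gl}_{n-i}$, both proper subalgebras since $1 \leq i < n$, completes the proof.

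The main obstacle, as flagged above, is dealing honestly with the fact that $N$ is block upper-triangular rather than block-diagonal: one must be careful that the decomposition on the second factor is genuinely governed by the regular nilpotent $N_2$ on $W_2$ and not some larger operator, which is why the argument is phrased on the quotient $\mathbb{C}^n/W_1$ where the cross-block entry of $N$ vanishes. A secondary point requiring care is verifying that the set-theoretic bijection is an isomorphism of varieties (or at least a homeomorphism, which is all Definition \ref{def: decomposable} requires) — this should follow from realizing $\Hess(N,H)$ as a fiber-bundle-like locus inside the partial flag variety $\{F_i = W_1\}$, but the bookkeeping on coordinate charts deserves a sentence.
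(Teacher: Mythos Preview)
Your proposal is correct and follows essentially the same route as the paper: both prove that $F_i$ (the paper's $V_j$) must equal $\mathrm{span}\{e_1,\dots,e_i\}$ from the fact that it is an $i$-dimensional $N$-invariant subspace, then split each flag into a flag on this subspace and a flag on the quotient $\mathbb{C}^n/\mathrm{span}\{e_1,\dots,e_i\}$, checking that the Hessenberg conditions decouple into conditions for $(N_1,H_1)$ and $(N_2,H_2)$ with $h_1 = h|_{[i]}$ and $h_2(k) = h(k+i)-i$. The only cosmetic differences are that the paper pins down $V_j$ by a direct linear-independence count on the iterates $\{N^k v\}$ rather than by citing the classification of invariant subspaces of a single Jordan block, and it is less explicit than you are about the cross-block entry $E_{(i,i+1)}$ and about why the resulting bijection is a homeomorphism.
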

\begin{proof}
Let $h:[n] \to [n]$ be a Hessenberg function with $h(j)=j$ for some $j<n$ and let $\Hess(N,H)$ be the corresponding regular nilpotent Hessenberg variety. We define two new type-$A$ Lie algebras and root systems by letting $G_1=GL_j(\mathbb{C})$ and $G_2=GL_{n-j}(\mathbb{C})$ and $\mathfrak{g}_1, \mathfrak{g}_2$ be their respective Lie algebras. For each, we define a Hessenberg function:
\begin{equation}
\begin{matrix}
h_1: &[j]& \to &[j]&\text{    and    }&h_2: &[n-j]& \to &[n-j]\\
& i& \mapsto &h(i)&\text{  }& & i& \mapsto &h(i+j)-j\\
\end{matrix}
\end{equation}
These determine two regular nilpotent Hessenberg varieties $\Hess(N_1,H_1)$ and $\Hess(N_2,H_2)$ where $N_1=N|_{\mathfrak{g}_1}$ and $N_2=N|_{\mathfrak{g}_2}$ are regular nilpotent operators in $\mathfrak{g}_1$ and $\mathfrak{g}_2$ respectively.  We will show that \begin{equation}
\Hess(N,H)\cong \Hess(N_1,H_1)\times \Hess(N_2,H_2).
\end{equation}
Define a map  
\begin{equation} \begin{matrix}\Hess(N_1,H_1)\times \Hess(N_2,H_2)&\to & \Hess(N,H)\\
(V_\bullet^{(1)},V_\bullet^{(2)})&\mapsto & V_\bullet^{(1)}\oplus V_\bullet^{(2)} \end{matrix}
\end{equation}
If $V_\bullet^{(1)}$ and $V_\bullet^{(2)}$ are flags in the two smaller Hessenberg varieties, then $V_\bullet$ is the flag in $\Hess(N,H)$ where
\begin{equation}
V_i= \begin{cases}
V_i^{(1)} &\text{ if }i\leq j\\
V_j^{(1)} \oplus V_{i-j}^{(2)}  &\text{ if }i> j
\end{cases}
\end{equation} In matrix notation $V_\bullet =\begin{bmatrix} V_\bullet^{(1)} & * \\ 0& V_\bullet^{(2)} \end{bmatrix}$. \\
\\ 
To see that $V_\bullet \in \Hess(N,H)$ we observe that 
\begin{equation}\begin{array}{rll}
NV_i= &N_1 V_i^{(1)} \subset V_{h_1(i)}^{(1)}=V_{h(i)} & \text{ if } i\leq j \\
NV_i \subseteq &V_j^{(1)} \oplus N_2 V^{(2)}_{i-j} \subset V_j^{(2)} \oplus V_{h_2(i-j)}^{(2)}=V_{h(i)} & \text{ if } i> j.\end{array} \end{equation}
As a direct sum of linear operators, this map is injective.  It remains to be shown that every flag $V_\bullet$ in $\Hess(N,H)$ has this form.\\
\\
Let $V_\bullet = V_1 \subsetneq V_2 \subsetneq \cdots \subsetneq V_n = \mathbb{C}^n$ be a flag in $\Hess(N,H)$.  Let $v\in V_j$ be a vector.  Without loss of generality say that $v=(v_1, v_2, \ldots, v_p,0, \ldots ,0)$ where $v_p$ is the last non-zero entry.  By the definition of $\Hess(N,H)$ the vector $Nv$ is also in $V_j$ as are all the vectors $N^kv$ for $k$ a non-negative integer.\\
\\
Since $v_p$ is non-zero, the vectors $N^kv$ are non-zero when $k$ is less than $p$.  The collection of vectors $\{N^kv: k=0,1,\ldots, p-1 \}$ is a linearly independent set in the space $V_j$. We know $V_j$  has dimension $j$ by definition of the flag.  Therefore $p$ is less than or equal to $j$.  This means any vector $v$ in $V_j$ must be in the span of the first $j$ basis elements.  We conclude that $V_j$ is equal to the span of the first $j$ basis elements and that the matrix form of $V_\bullet$ looks like
\begin{equation}
V_\bullet= \begin{bmatrix}
V_\bullet^{(a)} &*\\
0& V_\bullet^{(b)}
\end{bmatrix}.
\end{equation}
Here we define the two smaller flags to be $
V_i^{(a)}=V_i
$ and  $
V_i^{(b)}=V_{i+j}/V_j
$.  By the definition of $h_1$ the flag $V_\bullet^{(a)}$ is in $\Hess(N_1,H_1)$. For $V_\bullet^{(b)}$ we have that $N_2V_i^{(b)}=N_2 (V_{i+j}/V_j)$ which is equal to  $(NV_{i+j})/V_j$ as a subspace of $\mathbb{C}^{n-j}$. Similarly $V_{h_2(i)}^{(b)}$ is equal to $V_{h(i+j)}/V_j$ as a subspace of $\mathbb{C}^{n-j}$. Therefore for any $V_\bullet \in \Hess(N,H)$\begin{equation}
NV_{i+j} \subset V_{h(i+j)} \iff (NV_{i+j})/V_j \subset V_{h(i+j)}/V_j \iff N_2V_i^{(b)} \subset V_{h_2(i)}^{(b)}.
\end{equation}
Thus every flag $V_\bullet$ in $\Hess(N,H)$ is the product of a flag in $\Hess(N_1,H_1)$ and a flag in $\Hess(N_2,H_2)$. This process of decomposing the regular nilpotent Hessenberg variety into the product of smaller varieties can be repeated until each Hessenberg function preserves only the largest element of its domain.
\end{proof}
\noindent
The Peterson variety, which is the regular nilpotent Hessenberg variety corresponding to the Hessenberg function $h(i) = i+1$ for $i<n$,  is the smallest indecomposable regular nilpotent Hessenberg variety in type $A$.
\begin{Proposition}
The type $A$ regular nilpotent Hessenberg varieties that are indecomposable are contained in a closed interval in the poset $P_N$. The bottom element of this interval is the Peterson variety and the top element is the full flag variety. 
\end{Proposition}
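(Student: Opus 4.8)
The plan is to show that the set of indecomposable regular nilpotent Hessenberg varieties, as a subset of $\mathcal{P}_N$, sits exactly between the Peterson variety and the full flag variety $G/B = \Hess(N,\mathfrak{g})$. The top element is immediate: $\Hess(N,\mathfrak{g}) = G/B$ is the whole flag variety, which is not a product of two smaller flag varieties (its dimension, Betti numbers, or simply the fact that it is a homogeneous space for $G$ all obstruct a nontrivial decomposition), so $G/B$ is indecomposable and it is the unique maximum of $\mathcal{P}_N$ by the Lemma in Section \ref{sec: poset}. For the bottom element, I would first use the contrapositive of Theorem \ref{thm: decompose a}: if $\Hess(N,H)$ is indecomposable then its Hessenberg function satisfies $h(i) > i$ for every $i < n$, equivalently $h(i) \geq i+1$ for all $i<n$. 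Comparing with the Peterson Hessenberg function $p(i) = \min\{i+1,n\}$, this says exactly $h(i) \geq p(i)$ pointwise, i.e. $H_{\mathrm{Pet}} \subseteq H$ as Hessenberg spaces. By the Lemma of Section \ref{sec: poset}, this gives $\Hess(N,H_{\mathrm{Pet}}) \preceq \Hess(N,H)$ in $\mathcal{P}_N$, so every indecomposable regular nilpotent Hessenberg variety lies in the closed interval $[\Hess(N,H_{\mathrm{Pet}}),\, G/B]$ of $\mathcal{P}_N$.

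Next I would verify that both endpoints genuinely belong to the set being described, so the interval is sharp. The flag variety case is handled above. For the Peterson variety I would invoke the remark already made in the text that the Peterson variety is the smallest indecomposable regular nilpotent Hessenberg variety — but to make the Proposition self-contained I would give the short argument: the Peterson Hessenberg function $p(i) = i+1$ for $i<n$ has $p(i) > i$ everywhere, so Theorem \ref{thm: decompose a} does not force a decomposition, and a direct check (e.g. the type $A_2$ Peterson variety described earlier, two lines meeting at a point, is visibly connected but not a product, and in general the Peterson variety is irreducible of dimension $n-1$ with a single torus fixed point structure incompatible with a product decomposition into two positive-dimensional pieces, while a decomposition with a point factor would force $h(i)=i$ somewhere) shows it is indecomposable. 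One subtlety worth a sentence: Theorem \ref{thm: decompose a} gives only one direction (an $h(i)=i$ forces decomposability), so to claim the Peterson variety is honestly indecomposable I must argue the converse direction for this particular $H$, not merely cite the theorem.

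The main obstacle is precisely that gap between the two directions: Theorem \ref{thm: decompose a} as stated is a sufficient condition for decomposability, and the Proposition implicitly needs that for the Peterson function (and, more carefully, that no indecomposable variety lies strictly below the Peterson variety in $\mathcal{P}_N$) the condition $h(i)>i$ everywhere actually implies indecomposability — or at least that nothing forces the Peterson variety itself to split. I would address this by noting that any potential decomposition $\Hess(N,H) \cong \Hess(X_1,H_1)\times \Hess(X_2,H_2)$ with $X_1,X_2$ necessarily regular nilpotent on their blocks (since $N$ restricted to an $N$-invariant coordinate subspace coming from a flag is regular nilpotent) would, by the block-matrix analysis in the proof of Theorem \ref{thm: decompose a} run in reverse, force $h$ to have a value $h(i)=i$, contradicting $p(i)=i+1$. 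If a fully rigorous converse to Theorem \ref{thm: decompose a} is not available in the paper, the honest statement is that the interval $[\Hess(N,H_{\mathrm{Pet}}), G/B]$ \emph{contains} all indecomposable regular nilpotent Hessenberg varieties and has the Peterson variety and $G/B$ as its endpoints; the word ``contained in'' in the Proposition statement is exactly this weaker, provable claim, and the endpoints being the named varieties follows from the pointwise inequality $p(i)\le h(i)$ together with $\Hess(N,\mathfrak g)=G/B$.
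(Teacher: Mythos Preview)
Your core argument---taking the contrapositive of Theorem \ref{thm: decompose a} to obtain $h(i)\ge i+1$ for all $i<n$, hence $H_{\mathrm{Pet}}\subseteq H$, and then applying the containment Lemma---is exactly the paper's approach, which records the proof as a one-line consequence of that theorem. Your extended discussion about whether the endpoints are themselves indecomposable is not required by the Proposition as stated (it asserts only containment in the interval, not that the interval equals the indecomposable locus), and you correctly arrive at this reading by the end.
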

\begin{proof}
This is an immediate consequence of Theorem \ref{thm: decompose a}.
\end{proof}

\bibliography{mybib}{}
\bibliographystyle{plain}
\end{document}